\documentclass[11pt]{article}
\usepackage[a4paper,margin=1in]{geometry}
\usepackage{amsmath,amssymb,amsthm,mathtools,bm}
\usepackage{enumitem}
\usepackage[T1]{fontenc}
\usepackage{lmodern}
\usepackage{microtype}
\usepackage[colorlinks=true,linkcolor=blue,citecolor=blue,urlcolor=blue]{hyperref}
\usepackage{url}
\usepackage[nameinlink,noabbrev]{cleveref}
\usepackage{booktabs}
\usepackage{mathrsfs}

\newtheorem{theorem}{Theorem}[section]

\newtheorem{proposition}[theorem]{Proposition}

\newtheorem{lemma}[theorem]{Lemma}

\newtheorem{corollary}[theorem]{Corollary}

\theoremstyle{definition}

\newtheorem{definition}[theorem]{Definition}

\numberwithin{equation}{section}

\title{A Strict Gap Between Relaxed and Partition-Constrained\\
Spectral Compression in a Six-State Lumpable Markov Chain}
\author{Oleg Kiriukhin}
\date{April 2026}

\begin{document}
\maketitle

\begin{abstract}
This paper studies a finite reversible lumpable Markov chain for which relaxed spectral compression yields a larger determinant than partition-constrained compression. For a symmetric six-state lumpable chain and the positive operator $T=P^2$, I compare the relaxed benchmark
\begin{equation*}
\mathfrak D^{\mathrm{rel}}_3(T):=\sup_{U^*U=I_3}\det(U^*TU)
\end{equation*}
and the partition-constrained benchmark
\begin{equation*}
\sup_{\mathcal A\,\mathrm{3\text{--}partition}}\det Q_{\mathcal A}(T),
\qquad
Q_{\mathcal A}(T)=H_{\mathcal A}^*TH_{\mathcal A}.
\end{equation*}
Here the partition-constrained benchmark is the compression induced by normalized indicator vectors of genuine partitions of the state space.
I derive closed formulas for the two analytically central partition families, prove strict upper bounds for both in a local-mode-dominated regime, and combine these bounds with an exhaustive enumeration of all $90$ partitions into three nonempty cells in an explicit six-state model.
For this model, one obtains a strict global gap:
\begin{equation*}
\sup_{\mathcal A}\det Q_{\mathcal A}(T)<\mathfrak D^{\mathrm{rel}}_3(T).
\end{equation*}
Thus, in this model, indicator-based partition frames are strictly weaker than relaxed orthonormal frames even after global partition-constrained optimization.
\end{abstract}

\paragraph{Keywords.}
lumpable Markov chains, reversible Markov chains, six-state Markov chains, state aggregation, spectral compression, determinant optimization, partition-constrained compression.

\section{Introduction}\label{sec:intro}

Spectral compression and partition-constrained state aggregation are related but distinct procedures. The relaxed spectral problem optimizes over arbitrary orthonormal frames, whereas the partition-constrained problem is restricted to frames arising from normalized indicators of genuine partitions. This paper shows that the indicator constraint can produce a strict determinant loss in a reversible six-state lumpable Markov chain.

No broad classification theorem is claimed. Instead, the paper gives an explicit six-state result in which the partition-constrained optimization problem is resolved by exhaustive enumeration. This separates the analytic argument from the finite certificate.

The general background of reversible Markov chains, lumpability, and spectral state compression is classical. The main contribution is an explicit strict gap between the relaxed determinant benchmark and the optimal partition-constrained determinant in a concrete six-state reversible lumpable chain, together with closed formulas for the two analytically dominant structured partition subfamilies.

\paragraph{Organization of the paper.} \cref{sec:model} introduces the six-state lumpable model and its macro-local spectral decomposition. \cref{sec:benchmarks,sec:families,sec:closed-formulas,sec:diag-bound,sec:analytical-gap} set up the benchmarks, classify the relevant partition types, and prove the determinant bounds. \cref{sec:explicit-certificate} and \cref{app:certificate,app:ancillary} record the finite certificate for the concrete example.

\paragraph{Why determinant?} The relaxed optimum is the product of the three largest eigenvalues of $T$, so a strict determinant gap records a loss of jointly captured spectral content.

\paragraph{Scope of the result.} The main analytical contribution is the pair of closed formulas for the two dominant structured partition subfamilies together with strict upper bounds in the local-mode-dominated regime. The global gap is then established for the concrete six-state model by exhaustive enumeration.

\section{The six-state model}\label{sec:model}

I consider the symmetric stochastic matrix
\begin{equation*}
P=
\begin{pmatrix}
A_1 & c_{12}J & c_{13}J\\
 c_{12}J & A_2 & c_{23}J\\
 c_{13}J & c_{23}J & A_3
\end{pmatrix},
\qquad
A_i=\begin{pmatrix}a_i & b_i\\ b_i & a_i\end{pmatrix},
\qquad
J=\begin{pmatrix}1&1\\1&1\end{pmatrix},
\end{equation*}
with true block partition
\begin{equation}\label{eq:block-partition}
B_1=\{1,2\},\qquad B_2=\{3,4\},\qquad B_3=\{5,6\}.
\end{equation}
The row-sum constraints are
\begin{equation}\label{eq:row-sum-constraints}
a_1+b_1+2c_{12}+2c_{13}=1,
\qquad
 a_2+b_2+2c_{12}+2c_{23}=1,
\qquad
 a_3+b_3+2c_{13}+2c_{23}=1.
\end{equation}

Because rows inside each block have identical aggregated transition probabilities into every
block, the partition \eqref{eq:block-partition} is lumpable. Since $P$ is symmetric, the chain
is reversible with respect to the uniform law.

Define
\begin{equation*}
u_i:=\frac{\mathbf 1_{B_i}}{\sqrt2},
\qquad
w_1=\frac{(1,-1,0,0,0,0)}{\sqrt2},
\quad
w_2=\frac{(0,0,1,-1,0,0)}{\sqrt2},
\quad
w_3=\frac{(0,0,0,0,1,-1)}{\sqrt2}.
\end{equation*}
Then $\{
u_1,
u_2,
u_3,w_1,w_2,w_3\}$ is an orthonormal basis of $\mathbb R^6$. In this basis,
the macro subspace $\operatorname{span}\{
u_1,
u_2,
u_3\}$ is invariant and carries the quotient matrix
\begin{equation*}
K=
\begin{pmatrix}
a_1+b_1 & 2c_{12} & 2c_{13}\\
2c_{12} & a_2+b_2 & 2c_{23}\\
2c_{13} & 2c_{23} & a_3+b_3
\end{pmatrix},
\end{equation*}
while the local subspace $\operatorname{span}\{w_1,w_2,w_3\}$ satisfies
\begin{equation*}
Pw_r=(a_r-b_r)w_r=:\beta_r w_r.
\end{equation*}
Hence
\begin{equation*}
\operatorname{spec}(P)=\{1,\kappa_2,\kappa_3,\beta_1,\beta_2,\beta_3\},
\end{equation*}
where $1,\kappa_2,\kappa_3$ are the eigenvalues of $K$.

et
\begin{equation*}
T:=P^2,
\qquad
L:=K^2=(\ell_{ij})_{i,j=1}^3,
\qquad
t_r:=\beta_r^2,
\qquad
t_*:=\max_{r=1,2,3} t_r.
\end{equation*}
Then
\begin{equation*}
\operatorname{spec}(T)=\{1,\kappa_2^2,\kappa_3^2,t_1,t_2,t_3\}.
\end{equation*}

\section{Relaxed and partition-constrained benchmarks}\label{sec:benchmarks}

\begin{definition}[Relaxed determinant benchmark]\label{def:relaxed-benchmark}
For a positive self-adjoint operator $T$, define
\begin{equation*}
\mathfrak D^{\mathrm{rel}}_3(T):=\sup_{U^*U=I_3}\det(U^*TU).
\end{equation*}
\end{definition}

\begin{proposition}\label{prop:relaxed-product}
If $T$ is positive self-adjoint with eigenvalues
\begin{equation*}
\lambda_1(T)\ge \lambda_2(T)\ge \lambda_3(T)\ge \cdots \ge 0,
\end{equation*}
then
\begin{equation*}
\mathfrak D^{\mathrm{rel}}_3(T)=\lambda_1(T)\lambda_2(T)\lambda_3(T).
\end{equation*}
\end{proposition}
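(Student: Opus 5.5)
The plan is to prove the two inequalities $\mathfrak D^{\mathrm{rel}}_3(T)\ge \lambda_1\lambda_2\lambda_3$ and $\mathfrak D^{\mathrm{rel}}_3(T)\le \lambda_1\lambda_2\lambda_3$ separately, working in an orthonormal eigenbasis $e_1,e_2,\dots$ of $T$ with $Te_j=\lambda_j(T)e_j$. In particular, $\lambda_1\lambda_2\lambda_3$ will turn out to be a maximum, not just a supremum.

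\emph{Lower bound (attainment).} I take $U=[e_1\ e_2\ e_3]$. Then $U^*U=I_3$ and $U^*TU=\operatorname{diag}(\lambda_1,\lambda_2,\lambda_3)$, so $\det(U^*TU)=\lambda_1\lambda_2\lambda_3$. Hence the supremum is at least this product.

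\emph{Upper bound.} Fix any $U$ with $U^*U=I_3$ and set $M:=U^*TU$. This is a $3\times 3$ positive semidefinite self-adjoint matrix, with eigenvalues $\mu_1\ge\mu_2\ge\mu_3\ge 0$, so $\det M=\mu_1\mu_2\mu_3$. The key step is Cauchy interlacing for compressions, which I will justify directly from the Courant--Fischer min-max principle:
\begin{equation*}
\mu_k=\max_{\substack{V\subseteq\mathbb C^3\\ \dim V=k}}\ \min_{\substack{x\in V\\ \|x\|=1}} \langle Mx,x\rangle .
\end{equation*}
Since $U$ is an isometry, $\langle Mx,x\rangle=\langle TUx,Ux\rangle$ and $\|Ux\|=\|x\|$. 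Moreover, $UV$ is a $k$-dimensional subspace of the ambient space. So the right-hand side is bounded by the corresponding max-min over all $k$-dimensional subspaces of the ambient space, which equals $\lambda_k(T)$. This gives $0\le\mu_k\le\lambda_k(T)$ for $k=1,2,3$. Multiplying these three inequalities between nonnegative numbers yields
\begin{equation*}
\det M=\mu_1\mu_2\mu_3\le\lambda_1(T)\lambda_2(T)\lambda_3(T).
\end{equation*}
Taking the supremum over $U$ gives the upper bound, and combined with the attainment step this proves the proposition.

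\emph{Main obstacle.} The only nontrivial point is the interlacing step $\mu_k\le\lambda_k(T)$. It must be stated for the compression $U^*TU$ by an isometry rather than for a principal submatrix. The min-max argument above handles this directly, because isometries map $k$-dimensional subspaces to $k$-dimensional subspaces while preserving Rayleigh quotients.

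Positivity of $T$ is used only to guarantee $\mu_k\ge 0$, which is what allows the three eigenvalue inequalities to be multiplied termwise. Without it, a product of two negative eigenvalues could exceed the bound.

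In the application here, $T=P^2$ acts on $\mathbb R^6$. The argument is the same over $\mathbb R$, with real orthonormal frames and the real min-max principle.
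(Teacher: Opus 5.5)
Your proposal is correct and follows essentially the same route as the paper: attainment by placing the top three eigenvectors in the columns of $U$, and the upper bound via the Poincar\'e separation/Cauchy interlacing inequalities $\mu_k\le\lambda_k(T)$ for the compression $U^*TU$, followed by termwise multiplication of nonnegative numbers. The only difference is that you derive the interlacing step from Courant--Fischer, whereas the paper simply cites it.
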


\begin{proof}
Let $\mu_1\ge \mu_2\ge \mu_3\ge 0$ be the eigenvalues of the positive $3\times 3$ matrix $U^*TU$. Since $U$ is an isometry from $\mathbb R^3$ into the ambient Hilbert space, $U^*TU$ is a rank-three compression of $T$. By the Poincare separation theorem for compressions, equivalently by Cauchy interlacing for Ritz values, one has
\begin{equation*}
\mu_j\le \lambda_j(T),\qquad j=1,2,3.
\end{equation*}
Therefore
\begin{equation*}
\det(U^*TU)=\mu_1\mu_2\mu_3\le \lambda_1(T)\lambda_2(T)\lambda_3(T).
\end{equation*}
If the columns of $U$ are chosen as orthonormal eigenvectors corresponding to $\lambda_1(T),\lambda_2(T),\lambda_3(T)$, then $U^*TU$ is diagonal with those entries, so equality holds.
\end{proof}

In the regime
\begin{equation*}
\kappa_2^2>t_*>\kappa_3^2,
\end{equation*}
I obtain
\begin{equation*}
\mathfrak D^{\mathrm{rel}}_3(T)=\kappa_2^2 t_*.
\end{equation*}

\begin{definition}[Partition-constrained compression]\label{def:hard-compression}
Let $\mathcal A=\{A_1,A_2,A_3\}$ be an unordered partition of the six-state space into three nonempty cells. Choose any ordering of the three cells and define
\begin{equation*}
h_\alpha:=\frac{\mathbf 1_{A_\alpha}}{\sqrt{|A_\alpha|}},
\qquad
H_{\mathcal A}:=[h_1,h_2,h_3],
\end{equation*}
and
\begin{equation*}
Q_{\mathcal A}(T):=H_{\mathcal A}^*TH_{\mathcal A}.
\end{equation*}
If a different ordering is chosen, then $Q_{\mathcal A}(T)$ is conjugated by a $3\times 3$ permutation matrix, so its determinant is unchanged. Hence $\det Q_{\mathcal A}(T)$ depends only on the underlying unordered partition.
\end{definition}

\paragraph{Convention on cell labels.} In the partition-constrained problem the three cells are unlabeled objects. Any temporary ordering is used only to write the matrix $H_{\mathcal A}$, and permutation of that ordering does not change $\det Q_{\mathcal A}(T)$. Accordingly, every partition count in the paper is an unordered partition count.

\begin{proposition}\label{prop:S63}
The number of partitions into three nonempty cells of a six-element state space is the Stirling number of the second kind
\begin{equation*}
S(6,3)=90.
\end{equation*}
\end{proposition}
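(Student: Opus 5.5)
The plan is to count directly, classifying unordered partitions of a six-element set into three nonempty cells by their multiset of cell sizes. The only size patterns summing to $6$ with three positive parts are $\{4,1,1\}$, $\{3,2,1\}$ and $\{2,2,2\}$, and I will count each type separately. Since cells are unlabeled, as in the Convention on cell labels above, I must divide by the symmetries among cells of equal size.

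For type $\{4,1,1\}$, choose the four-element cell in $\binom64=15$ ways. The remaining two elements form two singletons in exactly one way, giving $15$ partitions. For type $\{3,2,1\}$, all cell sizes are distinct, so there is no overcounting. Choose the three-element cell in $\binom63=20$ ways and then the pair in $\binom32=3$ ways, giving $60$ partitions. For type $\{2,2,2\}$, the ordered choices number $\binom62\binom42\binom22=90$, and dividing by $3!=6$ for the unlabeled equal-size cells gives $15$. The total is $15+60+15=90$.

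As an independent check, I will also use the formula $S(n,k)=\frac1{k!}\sum_{j=0}^{k}(-1)^j\binom kj(k-j)^n$. Inclusion--exclusion counts the surjections onto three labels as $3^6-3\cdot2^6+3=729-192+3=540$, and dividing by $3!$ gives $90$. Alternatively, the recurrence $S(n,k)=kS(n-1,k)+S(n-1,k-1)$ gives $S(6,3)=3\cdot25+31=90$.

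There is no real obstacle here. The only point needing care is dividing by the symmetry factor in the $\{2,2,2\}$ case, which is consistent with the unordered convention fixed in Definition~\ref{def:hard-compression}. The type breakdown $15+60+15$ is also worth recording, since it is the natural way to organize the later exhaustive enumeration over partition families.
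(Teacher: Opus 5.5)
Your proof is correct, and your main route differs from the paper's. The paper simply invokes the inclusion--exclusion formula $S(6,3)=\frac{1}{3!}\sum_{j=0}^{3}(-1)^j\binom{3}{j}(3-j)^6=90$, which you use only as a check. You instead count directly by cell-size type and get $15+60+15$.

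The paper's route is a one-line certification. Yours is a bit longer, but it also proves the breakdown that \cref{sec:families} states without justification: exactly $15$ partitions of type $(1,1,4)$ and $60$ of type $(1,2,3)$. Your division by $3!$ in the $\{2,2,2\}$ case is also precisely where the unordered-cell convention matters.

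One slip needs fixing. In the recurrence check you wrote $S(6,3)=3\cdot 25+31$, but $3\cdot 25+31=106$. The correct second term is $S(5,2)=2^{4}-1=15$; the value $31$ is $S(6,2)$. This gives $3\cdot 25+15=90$. Since that line is only a redundant cross-check, your argument still stands, but as written the equation is false and should be corrected or dropped.
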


\begin{proof}
This is the standard count of unordered partitions of a six-element set into three nonempty cells. Equivalently,
\begin{equation*}
S(6,3)=rac{1}{3!}\sum_{j=0}^3(-1)^jinom{3}{j}(3-j)^6=90.
\end{equation*}
\end{proof}

\section{Partition families}\label{sec:families}

Every partition is encoded by its count matrix
\begin{equation*}
N=(n_{i\alpha})_{1\le i,\alpha\le 3},
\qquad
n_{i\alpha}:=|B_i\cap A_\alpha|.
\end{equation*}
Since each true block has size two,
\begin{equation*}
n_{i\alpha}\in\{0,1,2\},
\qquad
\sum_{\alpha=1}^3 n_{i\alpha}=2,
\qquad
|A_\alpha|=\sum_i n_{i\alpha}\ge 1.
\end{equation*}
For the explicit six-state model there are exactly $90$ partitions into three nonempty cells, see \cref{prop:S63}. Because the determinant is invariant under relabeling of the cells, this is the correct global count for the partition-constrained optimization problem.

The analytically important partitions fall into two structured subfamilies tied to the true block decomposition:
\begin{itemize}[leftmargin=1.5em]
\item the structured $(1,1,4)$ family, obtained by splitting one true block into two singletons and merging
the other two true blocks into a four-cell,
\item the structured $(1,2,3)$ family, obtained by keeping one true block intact, splitting another true
block, and attaching one singleton from the split block to the remaining intact block.
\end{itemize}
Among all partitions there are $15$ partitions of size type $(1,1,4)$ and $60$ partitions of size type $(1,2,3)$, but only $3$ and $12$ of them, respectively, belong to these structured subfamilies. The remaining partitions are handled globally by the exhaustive finite certificate in \cref{sec:explicit-certificate,app:certificate}. These structured subfamilies are precisely the ones for which closed determinant formulas are derived below.

\section{Closed determinant formulas}\label{sec:closed-formulas}

\begin{proposition}[Structured $(1,1,4)$ family]\label{prop:family-114}
Let $r\in\{1,2,3\}$, and let $\{p,q\}=\{1,2,3\}\setminus\{r\}$. For the partition obtained by
splitting $B_r$ into two singletons and merging $B_p\cup B_q$ into one four-cell,
\begin{equation}\label{eq:family-114}
\det Q^{(1,1,4)}_r=t_r\frac{3\ell_{rr}-1}{2}.
\end{equation}
\end{proposition}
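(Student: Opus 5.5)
The plan is to exploit the orthogonal macro--local decomposition of \cref{sec:model}. Let $B_r=\{x,y\}$ and write the three frame vectors as $e_x,e_y$ (the two singletons) and $h:=\mathbf 1_{B_p\cup B_q}/2$. In the basis $\{u_1,u_2,u_3,w_1,w_2,w_3\}$ we have
\begin{equation*}
e_x=\tfrac{1}{\sqrt2}(u_r+w_r),\qquad e_y=\tfrac{1}{\sqrt2}(u_r-w_r),\qquad h=\tfrac{1}{\sqrt2}(u_p+u_q).
\end{equation*}
Since $\det Q_{\mathcal A}(T)$ depends only on the span of the frame up to an orthogonal change of basis inside it, I first replace $(e_x,e_y,h)$ by the orthonormal triple $(u_r,w_r,h)$. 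This triple is related to the original one by a $3\times3$ orthogonal matrix (a $45^\circ$ rotation in the first two slots), so the determinant is unchanged.

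Next I use that $T=P^2$ leaves the macro subspace invariant, acts on it through $L=K^2$ in the $u$-basis, and satisfies $Tw_r=t_rw_r$. The two subspaces are orthogonal, so the compressed matrix in the new frame is block diagonal. It consists of the $1\times1$ block $t_r$ and the $2\times2$ compression of $L$ onto the vectors $e_r$ and $(e_p+e_q)/\sqrt2$ in $\mathbb R^3$. Hence
\begin{equation*}
\det Q^{(1,1,4)}_r=t_r\Bigl(\ell_{rr}\,m-\tfrac12 s^2\Bigr),\qquad m:=\tfrac12(\ell_{pp}+\ell_{qq}+2\ell_{pq}),\quad s:=\ell_{rp}+\ell_{rq}.
\end{equation*}

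The remaining step, and the only one needing an idea, is to eliminate the off-diagonal entries of $L$. By \eqref{eq:row-sum-constraints} every row of $K$ sums to $1$, so $K\mathbf 1=\mathbf 1$. Hence $L\mathbf 1=\mathbf 1$, and because $L$ is symmetric it is doubly stochastic. Row $r$ then gives $s=1-\ell_{rr}$. Adding rows $p$ and $q$ and using symmetry gives
\begin{equation*}
\ell_{pp}+\ell_{qq}+2\ell_{pq}=2-(\ell_{pr}+\ell_{qr})=1+\ell_{rr},
\end{equation*}
so $m=(1+\ell_{rr})/2$. Substituting these,
\begin{equation*}
\ell_{rr}\,\frac{1+\ell_{rr}}{2}-\frac{(1-\ell_{rr})^2}{2}=\frac{3\ell_{rr}-1}{2},
\end{equation*}
which yields \eqref{eq:family-114}.

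I expect no real obstacle. The points to check carefully are the orthogonality of the basis change and the stochasticity of $L$, which is inherited from $K$.
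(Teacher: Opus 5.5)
Your proof is correct and is essentially the paper's argument. The paper writes $Q^{(1,1,4)}_r$ in the indicator basis and subtracts the two singleton rows to pull out the factor $t_r$, obtaining $t_r(A_r\ell_{rr}-2B_r^2)$ with $A_r=m$ and $B_r=s/2$; this is the unnormalized version of your orthogonal passage to $(u_r,w_r,h)$, and the paper then eliminates the off-diagonal entries using the row sums and symmetry of $L$ exactly as you do. One cosmetic point: the matrix $\tfrac{1}{\sqrt2}\bigl(\begin{smallmatrix}1&1\\1&-1\end{smallmatrix}\bigr)$ taking $(e_x,e_y)$ to $(u_r,w_r)$ is a reflection rather than a rotation, which is harmless since only orthogonality is used.
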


\begin{proof}
The determinant computation is given in Appendix~\ref{app:determinant-expansions}. In the ordered cell basis, the compression matrix is a $3\times 3$ symmetric matrix with entries expressed in terms of $t_r$ and the coefficients of $L$. Expanding the determinant and using the row-sum identities for the symmetric stochastic matrix $L$ yields \eqref{eq:family-114}.
\end{proof}

\begin{proposition}[Structured $(1,2,3)$ family]\label{prop:family-123}
Let $r\in\{1,2,3\}$ be the split block, let $p\ne r$ be the intact block, and let
$q\in\{1,2,3\}\setminus\{p,r\}$. For the partition obtained by keeping $B_p$ intact,
splitting $B_r$ into two singletons, and attaching one singleton to $B_q$,
\begin{equation}\label{eq:family-123}
\det Q^{(1,2,3)}_{p,q,r}=
\frac13\Bigl(\det L+t_r(3\ell_{pp}-1)\Bigr).
\end{equation}
\end{proposition}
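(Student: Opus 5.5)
The plan is to compute $Q=H_{\mathcal A}^*TH_{\mathcal A}$ in the orthonormal basis $\{u_1,u_2,u_3,w_1,w_2,w_3\}$ of \cref{sec:model}. In that basis $T$ is block diagonal: it acts as $L=K^2$ on the macro span of the $u_i$ and as $t_s$ on each $w_s$. For the partition of the statement, write $B_r=\{x,y\}$ with $x$ the singleton cell and $y$ the element attached to $B_q$, and orient $w_r$ so that $\mathbf 1_{\{x\}}=(u_r+w_r)/\sqrt2$. Then $\mathbf 1_{\{y\}}=(u_r-w_r)/\sqrt2$, and the three normalized cell vectors are
\begin{equation*}
h_1=u_p,\qquad h_2=\tfrac{1}{\sqrt2}(u_r+w_r),\qquad h_3=\tfrac{1}{\sqrt6}\bigl(2u_q+u_r-w_r\bigr).
\end{equation*}
Only the four directions $u_p,u_q,u_r,w_r$ occur. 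Hence $H_{\mathcal A}=VG$, where $V=[u_p,u_q,u_r,w_r]$ is an isometry and $G$ is an explicit $4\times 3$ coefficient matrix. This gives $Q=G^{\top}DG$ with $D=L\oplus(t_r)$, a $4\times4$ matrix in which $L$ is indexed by $(p,q,r)$.

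Next I expand $\det(G^{\top}DG)$ by Cauchy--Binet over the $3\times3$ row minors of $G$. Because $D$ is block diagonal with a $1\times1$ block $t_r$, the expansion separates into exactly two parts.
\begin{itemize}[leftmargin=1.5em]
\item The part not involving $t_r$ is $\det(G_u)^2\det L$, where $G_u$ is the $u$-row block of $G$. A direct evaluation gives $|\det G_u|=\frac{2}{\sqrt{12}}=\frac{1}{\sqrt3}$, so this part equals $\tfrac13\det L$.
\item The part linear in $t_r$ is $t_r$ times the determinant of the compression of $L$ to the $2$-dimensional subspace of $\mathbb R^3$ cut out by the $w_r$-coefficients. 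Equivalently, it is $t_r$ times a quadratic form in the $2\times2$ minors of $G_u$ paired through $\operatorname{adj}$-type entries of $L$.
\end{itemize}
There is no $t_r^2$ term, since $w_r$ is one-dimensional.

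To handle the $t_r$ part cleanly, I will instead compute the explicit entries of $Q$:
\begin{equation*}
Q_{11}=\ell_{pp},\quad Q_{12}=\tfrac{\ell_{pr}}{\sqrt2},\quad Q_{13}=\tfrac{2\ell_{pq}+\ell_{pr}}{\sqrt6},\quad Q_{22}=\tfrac{\ell_{rr}+t_r}{2},\quad Q_{23}=\tfrac{2\ell_{qr}+\ell_{rr}-t_r}{2\sqrt3},\quad Q_{33}=\tfrac{4\ell_{qq}+4\ell_{qr}+\ell_{rr}+t_r}{6}.
\end{equation*}
Then I read off the coefficient of $t_r$ in $\det Q$ by linearity in $t_r$. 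That coefficient is a combination of $2\times2$ minors of the $L$-part. Matching it to $\tfrac13(3\ell_{pp}-1)$ requires the structural identities of $L$. First, $L=K^2$ is symmetric. Second, $L$ is stochastic, because $K$ is doubly stochastic (it has row sums $1$ by \eqref{eq:row-sum-constraints} and is symmetric). Hence $\ell_{pp}+\ell_{pq}+\ell_{pr}=1$, and similarly for the other rows.

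The main obstacle is this last simplification. The raw $t_r$-coefficient is a mixture of products such as $\ell_{pp}\ell_{qq}$, $\ell_{pq}^2$ and $\ell_{pr}\ell_{qr}$, and it is not obvious a priori that it collapses to an expression involving only $\ell_{pp}$. I will substitute $\ell_{pr}=1-\ell_{pp}-\ell_{pq}$, $\ell_{qr}=1-\ell_{pq}-\ell_{qq}$ and $\ell_{rr}=1-\ell_{pr}-\ell_{qr}$ to reduce everything to the free variables $\ell_{pp},\ell_{pq},\ell_{qq}$, and then check that all terms except $\ell_{pp}-\tfrac13$ cancel.

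As a cross-check, note that the vector $\mathbf 1=\sqrt2(u_p+u_q+u_r)$ lies in the span of the cells and is fixed by $T$. One can therefore change to a cell basis containing $\mathbf 1/\sqrt6$, which splits off an eigenvalue $1$ and leaves a $2\times2$ determinant. This is the same mechanism that produces the $\tfrac{3\ell_{rr}-1}{2}$ in \cref{prop:family-114}, and it should make the cancellation transparent.
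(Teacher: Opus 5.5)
Your proposal is correct and follows essentially the paper's route: build the compression matrix in the ordered cell basis, note that $\det Q$ equals $\frac13\det L$ plus a term linear in $t_r$ (your Cauchy--Binet observation justifies both facts cleanly, where the paper simply invokes a ``direct determinant expansion''), and reduce the $t_r$-coefficient, which comes out as $\frac13\bigl[\ell_{pp}(\ell_{qq}+2\ell_{qr}+\ell_{rr})-(\ell_{pq}+\ell_{pr})^2\bigr]$, to $\frac13(3\ell_{pp}-1)$ using the row sums of $L$ with $s=\ell_{pq}+\ell_{pr}=1-\ell_{pp}$. Your entries $Q_{13}=(2\ell_{pq}+\ell_{pr})/\sqrt6$ and $Q_{33}=(4\ell_{qq}+4\ell_{qr}+\ell_{rr}+t_r)/6$ are the correct ones --- the paper's displayed matrix loses the factor $|B_q|=2$ on its $\ell_{pq}$ and $\ell_{qq}$ terms, although the expanded determinant it then states (and hence the proposition) is right.
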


\begin{proof}
The determinant computation is given in Appendix~\ref{app:determinant-expansions}. In the ordered cell basis, the compression determinant expands to a linear expression in $t_r$ whose remaining coefficient depends only on entries of $L$. The row-sum identities for the symmetric stochastic matrix $L$ reduce that coefficient to $3\ell_{pp}-1$, which yields \eqref{eq:family-123}.
\end{proof}

\section{Diagonal spectral bound}\label{sec:diag-bound}

\begin{lemma}\label{lem:diag-bound}
Let $K$ be a symmetric stochastic $3\times 3$ matrix with eigenvalues $1,\kappa_2,\kappa_3$.
Then
\begin{equation}\label{eq:spectral-K2}
L=K^2=\frac13\mathbf 1\mathbf 1^\top+\kappa_2^2 v_2v_2^\top+\kappa_3^2 v_3v_3^\top
\end{equation}
for orthonormal $v_2,v_3\perp \mathbf 1$. Consequently,
\begin{equation}\label{eq:diag-bound}
\kappa_3^2\le \frac{3\ell_{ii}-1}{2}\le \kappa_2^2.
\end{equation}
\end{lemma}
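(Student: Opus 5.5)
The plan is to use the spectral theorem for $K$ together with the fact that $\mathbf 1$ is an eigenvector. Since $K$ is symmetric and stochastic, $K\mathbf 1=\mathbf 1$, so $\mathbf 1/\sqrt3$ is a unit eigenvector for the eigenvalue $1$. The orthogonal complement $\mathbf 1^\perp$ is then $K$-invariant, because $K$ is self-adjoint. Hence we can choose orthonormal eigenvectors $v_2,v_3\in\mathbf 1^\perp$ for $\kappa_2,\kappa_3$, which gives
\begin{equation*}
K=\tfrac13\mathbf 1\mathbf 1^\top+\kappa_2 v_2v_2^\top+\kappa_3 v_3v_3^\top .
\end{equation*}
Squaring, and using that the three projectors are mutually orthogonal and idempotent, yields \eqref{eq:spectral-K2} directly.

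For \eqref{eq:diag-bound}, read off the $(i,i)$ entry of \eqref{eq:spectral-K2}:
\begin{equation*}
\ell_{ii}=\tfrac13+\kappa_2^2 v_{2,i}^2+\kappa_3^2 v_{3,i}^2 .
\end{equation*}
The matrix $[\mathbf 1/\sqrt3,v_2,v_3]$ is orthogonal, so its rows are also unit vectors. This gives $\tfrac13+v_{2,i}^2+v_{3,i}^2=1$, i.e.\ $v_{2,i}^2+v_{3,i}^2=\tfrac23$. Setting $s=\tfrac32 v_{2,i}^2\in[0,1]$, we get
\begin{equation*}
\frac{3\ell_{ii}-1}{2}=s\,\kappa_2^2+(1-s)\,\kappa_3^2 ,
\end{equation*}
a convex combination of $\kappa_2^2$ and $\kappa_3^2$. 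It therefore lies between them.

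The one real obstacle is ordering. The statement implicitly assumes $\kappa_2^2\ge\kappa_3^2$, which is the labeling used in the regime $\kappa_2^2>t_*>\kappa_3^2$. In general the eigenvalues of $K$ in $[-1,1]$ need not satisfy this after squaring. So I would state at the outset that the labeling is chosen so that $\kappa_2^2\ge\kappa_3^2$. This relabeling is harmless: swapping $v_2$ and $v_3$ leaves \eqref{eq:spectral-K2} unchanged. With that convention the convex-combination bound gives exactly \eqref{eq:diag-bound}.
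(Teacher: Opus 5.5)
Your proposal is correct and follows essentially the same route as the paper. You split off $\mathbf 1/\sqrt3$ from the spectral decomposition of $K$, square it, read off $\ell_{ii}$, use $v_{2,i}^2+v_{3,i}^2=\tfrac23$ (which you justify more cleanly, via row-orthonormality of $[\mathbf 1/\sqrt3,v_2,v_3]$), and conclude with the convex-combination bound. Your remark that the labeling must satisfy $\kappa_2^2\ge\kappa_3^2$ is a fair clarification that the paper leaves implicit; it is automatically satisfied in the regime of Theorem~\ref{thm:family-gap}.
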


\begin{proof}
Since $K$ is symmetric stochastic, the normalized constant vector $\mathbf 1/\sqrt3$ is an eigenvector with eigenvalue $1$, and there exist orthonormal vectors $v_2,v_3\perp \mathbf 1$ such that
\begin{equation*}
K=\frac13\mathbf 1\mathbf 1^\top+\kappa_2 v_2v_2^\top+\kappa_3 v_3v_3^\top.
\end{equation*}
Squaring gives \eqref{eq:spectral-K2}. Taking the $ii$-entry yields
\begin{equation*}
\ell_{ii}=\frac13+\kappa_2^2 v_{2,i}^2+\kappa_3^2 v_{3,i}^2.
\end{equation*}
Because $v_2,v_3$ are orthonormal and orthogonal to $\mathbf 1$, one has
\begin{equation*}
v_{2,i}^2+v_{3,i}^2=\Bigl(e_i-\frac13\mathbf 1\Bigr)^2=\frac23.
\end{equation*}
Hence
\begin{equation*}
\frac{3\ell_{ii}-1}{2}=\frac32\bigl(\kappa_2^2 v_{2,i}^2+\kappa_3^2 v_{3,i}^2\bigr)=\alpha_i\kappa_2^2+(1-\alpha_i)\kappa_3^2
\end{equation*}
with $\alpha_i:=\frac32 v_{2,i}^2\in[0,1]$. Therefore \eqref{eq:diag-bound} follows.
\end{proof}

\section{Gap for the structured partition families}\label{sec:analytical-gap}

\begin{theorem}\label{thm:family-gap}
Assume the local-mode-dominated ordering
\begin{equation}\label{eq:A1}
\kappa_2^2>t_*>\kappa_3^2
\end{equation}
and the nondegeneracy condition
\begin{equation}\label{eq:A2}
\frac{3\ell_{rr}-1}{2}<\kappa_2^2
\qquad
\text{for every }r\text{ such that }t_r=t_*.
\end{equation}
Then every partition in the structured $(1,1,4)$ and structured $(1,2,3)$ subfamilies satisfies
\begin{equation*}
\det Q_{\mathcal A}(T)<\mathfrak D^{\mathrm{rel}}_3(T)=\kappa_2^2 t_*.
\end{equation*}
\end{theorem}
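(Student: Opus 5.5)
The plan is to combine the closed formulas of \cref{prop:family-114} and \cref{prop:family-123} with the diagonal spectral bound of \cref{lem:diag-bound}. Under the ordering \eqref{eq:A1}, the relaxed benchmark already equals $\kappa_2^2 t_*$ by \cref{prop:relaxed-product}. Two preliminary facts from \eqref{eq:A1} will be used throughout:
\begin{equation*}
\kappa_2^2>0 \qquad\text{and}\qquad t_*>\kappa_3^2\ge 0 .
\end{equation*}

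\emph{Structured $(1,1,4)$ family.} By \eqref{eq:family-114} the determinant is $t_r\cdot\frac{3\ell_{rr}-1}{2}$. By \eqref{eq:diag-bound} the second factor lies in $[\kappa_3^2,\kappa_2^2]$, so it is nonnegative and at most $\kappa_2^2$. I split into two cases.
\begin{itemize}[leftmargin=1.5em]
\item If $t_r<t_*$, then $t_r\frac{3\ell_{rr}-1}{2}\le t_r\kappa_2^2<t_*\kappa_2^2$. The strict step uses $\kappa_2^2>0$.
\item If $t_r=t_*$, the nondegeneracy hypothesis \eqref{eq:A2} gives $\frac{3\ell_{rr}-1}{2}<\kappa_2^2$. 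Multiplying by $t_*>0$ gives the strict inequality.
\end{itemize}
The only delicate point is this case split, which is exactly why \eqref{eq:A2} is imposed: without it, equality could occur when $\ell_{rr}$ saturates the upper end of \eqref{eq:diag-bound}.

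\emph{Structured $(1,2,3)$ family.} By \eqref{eq:family-123} the determinant is $\frac13\bigl(\det L+t_r(3\ell_{pp}-1)\bigr)$. From the spectral decomposition \eqref{eq:spectral-K2}, $\det L=1\cdot\kappa_2^2\kappa_3^2$. Since $\kappa_2^2>0$ and $\kappa_3^2<t_*$, this gives $\det L<\kappa_2^2t_*$ strictly. For the second term, \eqref{eq:diag-bound} gives
\begin{equation*}
0\le 3\ell_{pp}-1\le 2\kappa_2^2,
\qquad\text{hence}\qquad
t_r(3\ell_{pp}-1)\le 2t_r\kappa_2^2\le 2t_*\kappa_2^2 .
\end{equation*}
Adding the two bounds and dividing by $3$ yields $\det Q^{(1,2,3)}_{p,q,r}<\kappa_2^2t_*$. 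Here strictness comes entirely from the $\det L$ term, so \eqref{eq:A2} is not needed for this family.

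I do not expect a real obstacle. The main care points are the correct identification $\det L=\kappa_2^2\kappa_3^2$, the nonnegativity of $\frac{3\ell_{ii}-1}{2}$ (which justifies multiplying inequalities by $t_r\ge0$), and tracking where strictness comes from in each family.
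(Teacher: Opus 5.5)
Your proposal is correct and follows essentially the same route as the paper: you take the relaxed value $\kappa_2^2 t_*$ from \cref{prop:relaxed-product}, combine the closed formulas \eqref{eq:family-114} and \eqref{eq:family-123} with \eqref{eq:diag-bound}, get strictness in the $(1,2,3)$ family from $\det L=\kappa_2^2\kappa_3^2<\kappa_2^2 t_*$, and split the $(1,1,4)$ family into the cases $t_r<t_*$ and $t_r=t_*$, invoking \eqref{eq:A2} in the second. You are more explicit than the paper about the positivity facts ($\kappa_2^2>0$, $t_*>0$, $t_r\ge 0$) needed to multiply inequalities while keeping them strict; the paper uses these without stating them.
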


\begin{proof}
By \cref{prop:relaxed-product,eq:A1}, the relaxed value is $\kappa_2^2 t_*$. For a partition in
family $(1,2,3)$, \cref{eq:family-123,eq:diag-bound} gives
\begin{equation*}
\det Q^{(1,2,3)}_{p,q,r}
\le
\frac13\bigl(\kappa_2^2\kappa_3^2+2\kappa_2^2 t_r\bigr)
<\kappa_2^2 t_*.
\end{equation*}
For a partition in family $(1,1,4)$, \cref{eq:family-114,eq:diag-bound} yields
\begin{equation*}
\det Q^{(1,1,4)}_r=t_r\frac{3\ell_{rr}-1}{2}\le t_r\kappa_2^2\le t_*\kappa_2^2,
\end{equation*}
with strictness coming either from $t_r<t_*$ or from \eqref{eq:A2} when $t_r=t_*$.
\end{proof}

\section{Explicit six-state certificate for the global gap}\label{sec:explicit-certificate}

I now turn to the exact parameter choice
\begin{equation}\label{eq:example-params1}
c_{12}=0.003678,
\qquad
c_{13}=0.119189,
\qquad
c_{23}=0.116629,
\end{equation}
\begin{equation}\label{eq:example-params2}
(a_1,b_1)=(0.536022,0.218244),
\qquad
(a_2,b_2)=(0.5780345,0.1813515),
\qquad
(a_3,b_3)=(0.389373,0.138991).
\end{equation}
These decimal parameters satisfy the row-sum constraints \eqref{eq:row-sum-constraints} exactly, so the displayed matrix $P$ is exactly symmetric and stochastic. For this model the macro eigenvalues are approximately, rounded to six decimal places,
\begin{equation*}
1,\quad 0.749513,\quad 0.292503,
\end{equation*}
and the local mode magnitudes are approximately
\begin{equation*}
|\beta_1|=0.317778,\quad |\beta_2|=0.396683,\quad |\beta_3|=0.250382.
\end{equation*}
Therefore $t_* > \kappa_3^2$ and $\kappa_2^2>t_*$.

Exhaustive enumeration over all $90$ unordered partitions of the six-state space into three nonempty cells completes the global optimization problem. For this explicit model the relaxed determinant benchmark is
\begin{equation}\label{eq:example-relaxed}
\mathfrak D^{\mathrm{rel}}_3(T)=0.0883986324,
\end{equation}
while the largest partition determinant is
\begin{equation}\label{eq:example-hard-best}
\sup_{\mathcal A\,\mathrm{3\text{--}partition}}\det Q_{\mathcal A}(T)=0.0702908835.
\end{equation}
The natural block partition gives
\begin{equation*}
0.0480638931.
\end{equation*}
The maximizing partition in this example belongs to the structured $(1,1,4)$ family; in zero-based indexing of the six states it is the partition [[0, 1, 4, 5], [2], [3]].

\begin{corollary}\label{cor:explicit-global-gap}
For the explicit parameter choice \eqref{eq:example-params1}--\eqref{eq:example-params2}, one has
\begin{equation*}
\sup_{\mathcal A\,\mathrm{3\text{--}partition}}\det Q_{\mathcal A}(T)
<\mathfrak D^{\mathrm{rel}}_3(T).
\end{equation*}
\end{corollary}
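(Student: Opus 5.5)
The corollary is a finite certificate, so I will not try to derive it from structure alone. The plan has two parts: compute the relaxed benchmark exactly, then bound all $90$ partition determinants, with enough margin that rounding cannot affect the conclusion. The reported values leave a gap of about $0.088399-0.070291\approx 0.018$, which is huge compared with any floating-point error. So rigorous interval arithmetic, or even exact rational arithmetic, will suffice. The parameters are terminating decimals, so $P$, $T=P^2$ and every $Q_{\mathcal A}(T)$ have rational entries, up to the factors $1/\sqrt{|A_\alpha|}$. Those factors disappear in the determinant, because
\[
\det Q_{\mathcal A}(T)=\det\bigl(E_{\mathcal A}^\top T E_{\mathcal A}\bigr)\Big/\prod_\alpha |A_\alpha|,
\]
where $E_{\mathcal A}$ is the unnormalized indicator matrix. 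Hence every partition determinant is an exact rational number.

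\textbf{Relaxed side.} First I will certify the spectral ordering \eqref{eq:A1}. The $t_r=(a_r-b_r)^2$ are exact rationals, and $t_*=t_2=(0.396683)^2$. The values $\kappa_2,\kappa_3$ are the roots of the quadratic $\lambda^2-(\operatorname{tr}K-1)\lambda+\det K$ obtained by factoring $(\lambda-1)$ out of the characteristic polynomial of $K$. Rational coefficients plus interval evaluation give enclosures of $\kappa_2^2\approx0.5618$ and $\kappa_3^2\approx0.0856$. These lie clearly on either side of $t_*\approx0.1574$, so \eqref{eq:A1} holds. Then \cref{prop:relaxed-product} gives $\mathfrak D^{\mathrm{rel}}_3(T)=\kappa_2^2t_*$, and I obtain a rigorous lower bound, say $>0.0883$.

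\textbf{Partition side.} I will split the $90$ partitions in the spirit of \cref{sec:families}.
\begin{itemize}
\item For the $3+12$ structured partitions, I will use \cref{thm:family-gap} directly. Its hypothesis \eqref{eq:A2} is needed only for $r=2$. I will check it through \eqref{eq:family-114}, by computing $\ell_{22}$ exactly. Alternatively, I can simply evaluate \eqref{eq:family-114} and \eqref{eq:family-123} numerically with exact $L$.
\item For the remaining $75$ partitions, I will compute $\det(E_{\mathcal A}^\top TE_{\mathcal A})/\prod|A_\alpha|$ in exact rational arithmetic. I will then verify that each value is at most some threshold, for instance $0.071$, strictly below the lower bound on $\mathfrak D^{\mathrm{rel}}_3(T)$.
\end{itemize}
One could reduce the work using the symmetry swapping the two states inside each block, which fixes $P$. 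That symmetry cuts the enumeration roughly by a factor of up to $8$, but it is unnecessary for a certificate.

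\textbf{Main obstacle.} The hard part is not the inequality itself. It is making the enumeration auditable, in particular confirming that the list of partitions is complete, which I will cross-check against $S(6,3)=90$ from \cref{prop:S63}. I will state the exact rational maximum, attained at $\{\{1,2,5,6\},\{3\},\{4\}\}$, which is the structured $(1,1,4)$ partition with $r=2$. Its value $t_2(3\ell_{22}-1)/2\approx0.07029$ is consistent with \eqref{eq:family-114} and provides an independent check on the computation. Taking the maximum over all $90$ values and comparing it with the certified relaxed value then yields the strict gap.
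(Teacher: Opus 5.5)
Your proposal is correct and follows the same route as the paper. It evaluates all $90=S(6,3)$ partition determinants exhaustively and compares them against $\mathfrak D^{\mathrm{rel}}_3(T)=\kappa_2^2t_*$, which is obtained from \cref{prop:relaxed-product} once \eqref{eq:A1} is checked. Your extra ingredients (exact rational evaluation via $\det(E_{\mathcal A}^\top TE_{\mathcal A})/\prod_\alpha|A_\alpha|$, interval enclosures of $\kappa_2,\kappa_3$, and \cref{thm:family-gap} for the $15$ structured partitions) make the certificate more rigorous than the paper's bare decimal values without changing the argument.
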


\begin{proof}
This follows from the explicit enumeration of all $90$ partitions together with the certified
values \eqref{eq:example-relaxed} and \eqref{eq:example-hard-best}.
\end{proof}

\section{Discussion}\label{sec:discussion}

The analytical argument gives strict bounds for the two structured partition subfamilies, and the explicit six-state model yields a global relaxed-versus-partition-constrained separation by exhaustive certification. The result gives a concrete finite example, but it does not yield a structure theorem for larger block systems.

 natural next step is to identify an open parameter region for which the same strict gap persists. Another direction is to replace the finite certificate by a parameter-robust argument controlling all partition-constrained compressions simultaneously over a nontrivial parameter region.

\appendix
\section{Computational certificate for the explicit example}\label{app:certificate}

For the explicit parameter choice in \cref{sec:explicit-certificate}, the global partition-constrained optimization problem is finite: there are exactly $90=S(6,3)$ unordered partitions of a six-element set into three nonempty cells, see \cref{prop:S63}. I evaluated $\det Q_{\mathcal A}(T)$ for all $90$ partitions. The largest value is 0.0702908835, attained at the partition [[0, 1, 4, 5], [2], [3]] in zero-based indexing, while the natural block partition has value 0.0480638931 and the relaxed benchmark equals 0.0883986324. This certificate supports \cref{cor:explicit-global-gap} and isolates the only non-analytic step in the paper.

\section{Determinant expansions for the two hard families}\label{app:determinant-expansions}

This appendix records the direct $3\times 3$ determinant reductions underlying
\cref{prop:family-114,prop:family-123}. Each displayed identity comes from an explicit compression matrix followed by an
algebraic simplification using only symmetry and the row-sum identities for the stochastic matrix
$L=K^2$. Each calculation is written in the ordered cell basis specified at the start of the corresponding subsection.

\subsection{The $(1,1,4)$ family}

Fix $r\in\{1,2,3\}$ and let $\{p,q\}=\{1,2,3\}\setminus\{r\}$. In the ordered cell basis
consisting of the merged cell $B_p\cup B_q$ and the two singletons obtained by splitting $B_r$,
the hard compression matrix has the form
\begin{equation*}
Q^{(1,1,4)}_r=
\begin{pmatrix}
A_r & B_r & B_r\\
B_r & \frac{\ell_{rr}+t_r}{2} & \frac{\ell_{rr}-t_r}{2}\\
B_r & \frac{\ell_{rr}-t_r}{2} & \frac{\ell_{rr}+t_r}{2}
\end{pmatrix},
\end{equation*}
where
\begin{equation*}
A_r=\frac{\ell_{pp}+\ell_{qq}+2\ell_{pq}}{2},
\qquad
B_r=\frac{\ell_{pr}+\ell_{qr}}{2}.
\end{equation*}
Subtracting the third row from the second row and then expanding along the resulting row gives
\begin{equation*}
\det Q^{(1,1,4)}_r=t_r(A_r\ell_{rr}-2B_r^2).
\end{equation*}
Now $L$ is symmetric stochastic, so
\begin{equation*}
\ell_{pp}+\ell_{pq}+\ell_{pr}=1,
\qquad
\ell_{qq}+\ell_{qp}+\ell_{qr}=1,
\qquad
\ell_{rp}+\ell_{rq}+\ell_{rr}=1.
\end{equation*}
Using symmetry $\ell_{pq}=\ell_{qp}$ and $\ell_{pr}=\ell_{rp}$, $\ell_{qr}=\ell_{rq}$, one obtains
\begin{equation*}
A_r=\frac{(1-\ell_{pr})+(1-\ell_{qr})}{2}=1-\frac{\ell_{pr}+\ell_{qr}}{2}
=1-B_r,
\end{equation*}
and also
\begin{equation*}
\ell_{rr}=1-(\ell_{pr}+\ell_{qr})=1-2B_r.
\end{equation*}
Hence
\begin{equation*}
A_r\ell_{rr}-2B_r^2=(1-B_r)(1-2B_r)-2B_r^2=1-3B_r=rac{3\ell_{rr}-1}{2},
\end{equation*}
which yields
\begin{equation*}
\det Q^{(1,1,4)}_r=t_r\frac{3\ell_{rr}-1}{2}.
\end{equation*}

\subsection{The $(1,2,3)$ family}

Fix a split block $r$, an intact block $p\neq r$, and let $q$ be the remaining block. In the
ordered cell basis consisting of the intact block $B_p$, the singleton split off from $B_r$, and
the three-cell formed by the remaining singleton of $B_r$ together with $B_q$, the compression
matrix has the form
\begin{equation*}
Q^{(1,2,3)}_{p,q,r}=
\begin{pmatrix}
\ell_{pp} & \frac{\ell_{pr}}{\sqrt2} & \frac{\ell_{pq}+\ell_{pr}}{\sqrt6}\\
\frac{\ell_{pr}}{\sqrt2} & \frac{\ell_{rr}+t_r}{2} & \frac{\ell_{qr}}{\sqrt3}+\frac{\ell_{rr}-t_r}{\sqrt{12}}\\
\frac{\ell_{pq}+\ell_{pr}}{\sqrt6} & \frac{\ell_{qr}}{\sqrt3}+\frac{\ell_{rr}-t_r}{\sqrt{12}} & \frac{\ell_{qq}+2\ell_{qr}}{3}+\frac{\ell_{rr}+t_r}{6}
\end{pmatrix}.
\end{equation*}
A direct determinant expansion simplifies to
\begin{equation*}
\det Q^{(1,2,3)}_{p,q,r}=\frac13\Bigl(
\det L+t_r\bigl[\ell_{pp}(\ell_{qq}+2\ell_{qr}+\ell_{rr})-(\ell_{pq}+\ell_{pr})^2\bigr]
\Bigr).
\end{equation*}
Because $L$ is symmetric stochastic,
\begin{equation*}
\ell_{qq}+2\ell_{qr}+\ell_{rr}=(\ell_{qq}+\ell_{qr})+(\ell_{qr}+\ell_{rr})
=(1-\ell_{pq})+(1-\ell_{pr})=2-(\ell_{pq}+\ell_{pr}).
\end{equation*}
Also the $p$th row sum identity gives $\ell_{pp}=1-(\ell_{pq}+\ell_{pr})$. Writing
$s:=\ell_{pq}+\ell_{pr}$, I get
\begin{equation*}
\ell_{pp}(\ell_{qq}+2\ell_{qr}+\ell_{rr})-(\ell_{pq}+\ell_{pr})^2
=(1-s)(2-s)-s^2=2-3s=3\ell_{pp}-1.
\end{equation*}
Substituting this identity yields
\begin{equation*}
\det Q^{(1,2,3)}_{p,q,r}=\frac13\bigl(\det L+t_r(3\ell_{pp}-1)\bigr).
\end{equation*}

\section{Computational note on the explicit certificate}\label{app:ancillary}

The global corollary for the explicit six-state model rests on a finite enumeration of all partitions into three nonempty cells. Since there are exactly $90$ such partitions, the verification is reproducible. The main text records only the summary statistics needed for \cref{cor:explicit-global-gap}.


\begin{thebibliography}{99}

\bibitem{LevinPeresWilmer}
D. A. Levin, Y. Peres, and E. L. Wilmer,
\emph{Markov Chains and Mixing Times},
American Mathematical Society, Providence, RI, second edition, 2017.

\bibitem{AldousFill}
D. Aldous and J. Fill,
\emph{Reversible Markov Chains and Random Walks on Graphs},
unfinished monograph, available online at \url{https://www.stat.berkeley.edu/~aldous/RWG/book.html}.

\bibitem{KemenySnell}
J. G. Kemeny and J. L. Snell,
\emph{Finite Markov Chains},
Springer, New York, 1976.

\bibitem{SimonAndo}
H. A. Simon and A. Ando,
Aggregation of variables in dynamic systems,
\emph{Econometrica} \textbf{29} (1961), 111--138.

\bibitem{Meyer}
C. D. Meyer,
Stochastic complementation, uncoupling Markov chains, and the theory of nearly reducible systems,
\emph{SIAM Review} \textbf{31} (1989), 240--272.

\bibitem{DuanDunsonCarin}
L. Duan, D. B. Dunson, and L. Carin,
Spectral state compression of Markov processes,
\emph{Adv. Neural Inf. Process. Syst.} \textbf{32} (2019), 7586--7595.

\end{thebibliography}
\end{document}